\theoremstyle{plain}
\newtheorem*{theorem*}{Theorem}
\newtheorem*{corollary*}{Corollary}
\newtheorem*{lemma*}{Lemma}
\newtheorem*{proposition*}{Proposition}
\theoremstyle{definition}
\newtheorem*{definition*}{Definition}
\newtheorem*{example*}{Example}
\newtheorem*{conjecture*}{Conjecture}
\newtheorem*{problem*}{Problem}
\newtheorem*{todo*}{TO DO}
\newtheorem*{question*}{Question}
\theoremstyle{definition}
\newtheorem*{remark*}{Remark}
\theoremstyle{remark}
\newtheorem*{notation*}{Notation}
\definecolor{webgreen}{rgb}{0,.5,0}
\definecolor{webbrown}{rgb}{.6,0,0}
\title{Random Surfaces with Boundary}
\author{Chaim Even-Zohar}
\address{Chaim Even-Zohar, The Alan Turing Institute, London, NW1 2DB, UK}
\email{chaim@ucdavis.edu}
\author{Michael Farber}
\address{Michael Farber, Queen Mary University of London, E1 4NS, UK}
\email{m.farber@qmul.ac.uk}
\begin{document}

\maketitle

\begin{abstract}
\vspace*{-2em}
A surface with boundary is randomly generated by gluing polygons along some of their sides. We show that its genus and number of boundary components asymptotically follow a bivariate normal distribution.
\vspace*{-1em}
\end{abstract}

\section{Introduction}

A method for randomly generating a closed orientable surface was suggested by Pippenger and Schleich~\cite{pippenger2006topological}, and in an equivalent form by Brooks and Makover~\cite{brooks2004random}. Take an even number $n$ of triangles, and glue their edges in pairs, in an orientation-preserving way. The glued pairs are uniformly selected from all $(3n-1)!!$ possible matchings between the edges. Every closed orientable surface is realized this way. Other polygons may be used instead of triangles.

The resulting surface is connected with high probability. Using tools from the spectral analysis of the symmetric group~\cite{diaconis1981generating,fomin1997number}, Gamburd proved that the distribution of its genus is asymptotically normal~\cite{gamburd2006poisson}. Its mean is $\frac14 n-\frac12\log n$, and the variance is $\frac14\log n$, up to constant order terms, which have been determined in~\cite{fleming2010large}. For a geometric study of such random surfaces, see~\cite{brooks2004random}.

A similar construction arises from a classical work by Harer and Zagier. Take one large polygon with an even number $n$ of edges, and glue pairs at random such that all $(n-1)!!$ matchings are equally likely. The genus of the resulting surface is again asymptotically normal with mean $\frac14 n -\frac12 \log n$ and variance $\frac14\log n$~\cite{harer1986euler, pippenger2006topological, linial2011expected, chmutov2013genus}.

Chmutov and Pittel~\cite{chmutov2016surface} refined these results, and gave unified proofs for both models, using new estimates on characters of the symmetric group by Larsen and Shalev~\cite{larsen2008characters}. 

\medskip
Here we define and study natural extensions of these constructions to oriented surfaces with boundary. This is done by adding to the polygons designated boundary sides that are not to be glued. Thus, instead of taking $n$ triangles as before, we replace $m$ of them with squares that have one unmatched boundary edge. 
\begin{center}
\begin{tikzpicture}[line width=0.75pt,black,scale=1]
\foreach \x in {0,...,5} \draw (\x,0) -- (\x+0.7,0) -- (\x+0.35,0.6) -- (\x,0);
\foreach \x in {6,...,9} \draw (\x+0.1,0) -- (\x+0.7,0) -- (\x+0.7,0.6) -- (\x+0.1,0.6) -- (\x+0.1,0);
\foreach \x in {6,...,9} \draw[gray,line width=3.5pt] (\x+0.7,0.6) -- (\x+0.7,0);
\pgfresetboundingbox \clip (0,-0.5) rectangle (10,1.0);
\end{tikzpicture}
\raisebox{1.75em}{$\;\;\;\rightsquigarrow\;\;\;T$}\;\;
\end{center}

We glue the $3n$ non-boundary edges as before with a uniformly random matching. The resulting surface is denoted~$T(n,m)$, or~$T$ for short. 

Alternatively we take one polygon, with an even number $n$ of ordinary edges plus $m$ boundary edges. We randomly place the edges of different types around the polygon, where each of the $\tbinom{n+m}{m}$ possibilities is equally likely. Then we randomly match and glue the $n$ non-boundary edges as before. The resulting surface is denoted $S=S(n,m)$. 
\begin{center}
\begin{tikzpicture}[line width=0.75pt,black,scale=0.5]
\foreach \x in {0,1,3,4,6,7,8,9} \draw (\x*30+15:2) -- (\x*30-15:2);
\foreach \x in {2,5,10,11} \draw[gray,line width=3.5pt] (\x*30+15:2) -- (\x*30-15:2);
\pgfresetboundingbox \clip (-2.2,-2.2) rectangle (2.2,2.2);
\end{tikzpicture}
\raisebox{2.5em}{$\;\;\;\;\rightsquigarrow\;\;\;\;S$}\;\;
\end{center}

We suggest a slight variation on $S(n,m)$, to avoid the redundancy of adjacent boundary edges which are actually equivalent to one. We take a polygon with $n$ edges and randomly select a subset of $m$ corners, uniformly from all $\tbinom{n}{m}$ possible choices. At each of these corners we insert an additional boundary edge, sticking it between the two original ones. We glue the $n$ ordinary edges as before. This random surface will be denoted $S'(n,m)$.

We similarly modify $T(n,m)$, starting with $n$ triangles and then inserting boundary edges at $m$ random corners, picked uniformly from all $\tbinom{3n}{m}$ subsets. The $3n$ original edges are randomly glued as before. This random surface will be denoted $T'(n,m)$.

\medskip
Every compact orientable surface with boundary is obtained with positive probability as $T$ or~$T'$, for some $n$ and $m$ large enough. However, with high probability it is a connected one, exactly as shown for closed surfaces in~\cite[\S3]{pippenger2006topological}. Clearly $S$ and $S'$ are always connected, and produce any connected compact orientable surface with boundary. The topology of such surfaces is determined by their genus and number of boundary components. We describe the limit distribution of these two random variables. 

\begin{theorem*}\label{1}
Let $m = m(n)$ for even $n\in\mathbb{N}$ be such that $1 \ll m \ll n$, and let $r \in [0,1]$ be such that
$$ \frac{\log m}{\log n} \;\;\xrightarrow[\;\;n \to \infty\;\;]{}\;\; r^2 \;\;.$$
The genus $G$ and the number of boundary components $B$ of a random surface, sampled either from $T'(n,m)$ or from $S'(n,m)$, satisfy:
$$ \left[\;\begin{matrix} \displaystyle\frac{1}{\sqrt{\log m}}(B-\log m) \\[+1em] \displaystyle\frac{2}{\sqrt{\log n}}\left(G-\left(\tfrac14 n - \tfrac12\log n\right)\right) \end{matrix}\;\right] \;\;\;\xrightarrow[\;\;n\to\infty\;\;]{\;\;\;\mathcal{D}\;\;\;}\;\;\; \mathcal{N}\left(\,\left[\begin{matrix}0\\0\end{matrix}\right],\left[\begin{matrix}1&-r\,\\-r\,&1\end{matrix}\right]\,\right) 
$$
\end{theorem*}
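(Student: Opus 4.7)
My plan is to reduce to the closed-surface models of Harer--Zagier and Pippenger--Schleich, identify $B$ as a vertex statistic of those closed surfaces, and import a joint central limit theorem for cycle counts of the associated random permutations.

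First I would show that the genus of $S'(n,m)$ equals the genus $G_{HZ}$ of the closed Harer--Zagier surface obtained by gluing the $n$ ordinary edges of the underlying $n$-gon while ignoring the $m$ inserted slits. Around each HZ-vertex $v$, the incident corners lie in a cyclic order determined by the gluing, and the $k_v$ slits placed at those corners split $v$ into $\max(k_v,1)$ vertices of $S'(n,m)$, one per arc between consecutive slits. Summing gives $V_{S'} = V_{HZ} - B + m$; inserting this into $\chi(S') = V_{S'} - E_{S'} + F_{S'} = 2 - 2G - B$, together with $E_{S'} = n/2 + m$, $F_{S'} = 1$, and the HZ identity $V_{HZ} - n/2 + 1 = 2 - 2G_{HZ}$, forces $G = G_{HZ}$. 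The analogous reduction via the Pippenger--Schleich surface treats $T'(n,m)$.

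Second I would identify $B$ combinatorially. Tracing the boundary around an HZ-vertex $v$ with $k_v \ge 1$ slits shows that those $k_v$ slits are linked into a single closed boundary loop by the arcs between consecutive slits, while vertices with $k_v = 0$ contribute nothing. Hence $B$ equals the number of HZ-vertices containing at least one of the $m$ chosen corners, equivalently the number of cycles of the HZ permutation $\sigma$ on $\{1,\dots,n\}$ that meet an independent uniform $m$-subset $S \subseteq \{1,\dots,n\}$.

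The theorem then reduces to a joint asymptotic normality statement for $V_{HZ}$ and this hit-cycle count $B$. The key input is that $\sigma$ belongs to the ``logarithmic class'' with parameter $\theta = 1$: results going back to Harer--Zagier, refined by Gamburd and Chmutov--Pittel, imply that the number of cycles of $\sigma$ of length $\ge L$ is asymptotically normal with mean and variance both $\log(n/L) + O(1)$, and that cycle counts in disjoint length ranges are asymptotically independent. Since a cycle of length $d$ avoids $S$ with probability $\approx e^{-md/n}$, the variable $B$ is concentrated within $o(\sqrt{\log m})$ of the count of cycles of length $\ge n/m$. This yields $\E[B] \sim \log m$, $\mathrm{Var}(B) \sim \log m$, and $\mathrm{Cov}(V_{HZ},B) \sim \mathrm{Var}(B) \sim \log m$ by the asymptotic independence of the ``long'' and ``short'' cycle counts; substituting $V_{HZ} = n/2 + 1 - 2G$ recovers exactly the covariance matrix in the theorem.

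The main obstacle is the joint central limit theorem just invoked: one must establish joint asymptotic normality of the cycle counts of the HZ and PS random permutations at two different length thresholds, and the $o(\sqrt{\log m})$ approximation of $B$ by the count at threshold $L = n/m$. I expect this to go through by computing joint moments of these cycle counts via the symmetric-group character machinery of Chmutov--Pittel, using the Larsen--Shalev character bounds to control the contribution of non-trivial irreducible representations.
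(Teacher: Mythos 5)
Your combinatorial reduction is correct and in substance the same as the paper's: identifying $B$ with the number of cycles of the gluing permutation that meet the random $m$-subset of corners, and observing that $G$ coincides (up to $O(1)$) with the genus of the underlying closed Harer--Zagier or Pippenger--Schleich surface, is exactly the content of the paper's Euler-characteristic computation $2-2G=(n+B)-(\tfrac32 n+m)+(I+m)$. Where you genuinely diverge is the probabilistic core, and there your route is much heavier than necessary. The paper exploits the fact that the edge labels are uniform and independent of the gluing, so the random $m$-subset may be taken to be $\{1,\dots,m\}$; then Feller's indicators $X_k$ ($X_k=1$ iff $k$ is the minimum of its cycle) are \emph{exactly} mutually independent for a uniform permutation, $B=\sum_{k\le m}X_k$ and $I=\sum_{k>m}X_k$ are independent sums of independent Bernoullis, and the bivariate CLT is immediate; the only external input is the Chmutov--Pittel bound that $\gamma$ is $O(1/N)$-close in total variation to a parity-conditioned uniform permutation, with the parity bias removed by composing with $(1\,2)$ after a fair coin flip. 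By contrast, you propose to approximate $B$ by the count of cycles of length $\ge n/m$ and then prove a joint CLT for cycle counts at two length thresholds. Both steps can be made to work: the approximation error is bounded in expectation by the $O(\log\omega)$ cycles in a window $[n/(m\omega),\,n\omega/m]$ plus $O(1/\omega)$ spillover, which is $o(\sqrt{\log m})$ for slowly growing $\omega$; and the two-threshold joint CLT for a \emph{uniform} permutation is the DeLaurentis--Pittel functional CLT for the cycle-count process. But your stated ``main obstacle'' --- establishing this joint CLT for the HZ/PS permutation by fresh character computations --- is not actually an obstacle: the total-variation closeness to a (parity-conditioned) uniform permutation already transfers any distributional statement about cycle counts, so no new use of the Larsen--Shalev bounds is needed. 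You also leave the parity conditioning unaddressed; it is harmless (it perturbs cycle counts by $O(1)$), but it must be said. In short: same skeleton, but your probabilistic argument replaces an elementary exact-independence device with an approximate length-threshold decomposition whose hardest step you have not closed, even though it is closable.
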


\begin{corollary*}
The same limit law holds for the genus $G$ and the number of boundary components $B$ of a random surface from $T(n,m)$ or $S(n,m)$, if the anti-correlation coefficient $r$ is smaller than $\sqrt{0.5} \approx 0.71$.
\end{corollary*}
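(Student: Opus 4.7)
The plan is to reduce the corollary to the theorem via a coupling argument: I will show that $T(n,m)$ coincides with $T'(n,m)$ (and likewise $S(n,m)$ with $S'(n,m)$) with probability tending to $1$ as long as $r < \sqrt{0.5}$, so that the bivariate normal limit for $(G,B)$ transfers directly.

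The couplings themselves are combinatorial. In $S(n,m)$, I would call a configuration \emph{separated} if no two boundary edges of the single polygon are cyclically adjacent; on this event the boundary edges are in natural bijection with insertions at $m$ distinct corners among the $n$ corners between ordinary edges, so the conditional law equals $S'(n,m)$. In $T'(n,m)$, conditioning on the event that every triangle receives at most one boundary edge produces $m$ uniformly chosen triangles each carrying a uniformly chosen boundary edge --- exactly the law of $T(n,m)$ up to a relabeling of polygons that does not affect the topology. In both models the random matching of ordinary edges is independent of the boundary placement, so no work is required on the gluing step.

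I would then estimate the probability of each conditioning event. A direct count gives
$$\Pr[\text{separated in }S(n,m)] \;=\; \frac{(n+m)\binom{n}{m}}{n\binom{n+m}{m}} \;=\; \exp\!\bigl(-m^2/n + o(1)\bigr),$$
and a birthday-style calculation gives
$$\Pr[\text{no collision in }T'(n,m)] \;=\; \frac{3^m\binom{n}{m}}{\binom{3n}{m}} \;=\; \exp\!\bigl(-m^2/(3n) + o(1)\bigr).$$
The hypothesis $r^2 = \lim(\log m)/(\log n) < 1/2$ is precisely the condition $m^2/n \to 0$, under which both events occur with probability $1-o(1)$. The corollary then follows from the elementary principle that if two random variables agree with probability $1-o(1)$ and one converges in distribution, so does the other.

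The main obstacle is the combinatorial identification in the second paragraph, particularly for $T$ versus $T'$: one must check carefully that the conditional law on the no-collision event reproduces $T(n,m)$ exactly, keeping track both of the choice of which $m$ of the $n$ triangles become squares and of the corner at which each boundary edge sits. Once the relevant symmetries are invoked this is routine, and the probabilistic step is standard. The threshold $r < \sqrt{0.5}$ is sharp for this approach, since at $r = \sqrt{0.5}$ the quantity $m^2/n$ is only of order $n^{o(1)}$ and need not vanish; passing beyond it would require directly analysing the topological effect of boundary-edge collisions on $(G,B)$, which I would not attempt within this coupling framework.
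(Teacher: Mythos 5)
Your proposal is correct and follows essentially the same route as the paper: condition on the event that boundary insertions are separated (no two adjacent in $S$, no two in the same triangle in $T'$), identify the conditional law with the primed/unprimed model, and note that $r^2<1/2$ forces $m\ll\sqrt n$ and hence $m^2/n\to0$, so the event holds with probability $1-o(1)$. The only cosmetic difference is that you compute the exact probability of the good event, while the paper dispatches it with a union bound over the $O(n)$ adjacent pairs.
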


\begin{proof}[Proof (of the corollary)]
For such $r$, boundary edges occur in any two given locations with probability $O(({m}/{n})^2)=o({1}/{n})$, since $m \ll \sqrt{n}$. Hence all the chosen boundary locations are separated by unchosen ones with probability approaching one, because this means avoiding only $O(n)$ adjacent pairs. Conditioning on this event, $S$ reduces to~$S'$, and $T$ reduces to~$T'$.
\end{proof}

\section{Proof}

The gluing is analyzed similarly to the closed case~\cite{gamburd2006poisson, chmutov2016surface}. We randomly label the non-boundary edges by $\{1, \dots, N\}$, uniformly from all $N!$ choices, and independently of the matching that glues them. This extra layer of randomness was originally introduced only for the analysis of the gluing, but here it will come useful also for the study of the boundary. As in~\cite{chmutov2016surface}, the same argument applies to both models, where \mbox{$N=3n$} for a surface in $T'(n,m)$ and $N=n$ for~$S'(n,m)$.

Let $\alpha$ be the order-$N$ permutation that maps every non-boundary edge to the next one clockwise in its polygon, and let $\beta$ map every edge to the one it is glued to. Note that $\alpha$~is uniform of cycle type $(3,3,\dots,3)$ or~$(N)$, depending on the model, and $\beta$ is uniform of type $(2,2,\dots,2)$. Crucially, the gluing map $\beta$ is independent of the map $\alpha$. In the original setting of no boundary, every cycle of $\gamma = \alpha \circ \beta$ corresponds to an internal vertex of the glued surface, and the cycle's length is the number of polygon corners around it. 

The boundary of $S'$ or $T'$ consists of $m$ edges, inserted at a random subset of the $N$ polygon corners. Without loss of generality, we select the corners counterclockwise next to the non-boundary edges labeled~$\{1, \dots, m\}$. Indeed, this choice is independent of the gluing, and uniform from all subsets of $m$ corners. Wherever we insert some boundary edges, the polygon corners no longer meet at an internal vertex, but rather at the boundary vertices that connect those new edges. In terms of $\gamma$, the cycles that contain any of the labels $\{1, \dots, m\}$ now correspond to boundary components, and not to internal vertices.

In conclusion, both the number of internal vertices and the number of boundary components can be read off from the random permutation~$\gamma$. From the cycle types of $\alpha$ and $\beta$ in the model $T'(n,m)$ it follows that $\gamma$~is even iff $N/2$ is even, while $\gamma$ is even iff $N/2$ is odd in~$S'(n,m)$.  

The main result we use from Gamburd and Chmutov--Pittel is that in both models $\gamma$ is approximately uniform. More precisely, up to an $O(1/N)$ total variation distance, $\gamma$~follows the distribution of a uniformly random order-$N$ permutation in the appropriate parity class~\cite[Th~2.2]{chmutov2016surface}. Recall that the \emph{total variation distance} between the distributions of $\gamma$ and~$\gamma'$ is defined as~$\max_E|P(\gamma \in E) - P(\gamma' \in E)|$.

\medskip
A classical method for counting the cycles of a permutation $\gamma$ defines $X_k(\gamma)=1$ if $k$ is the smallest number in its cycle, and $0$ otherwise. Since every cycle contains one minimum, we sum these indicators. Moreover, in our setting we can distinguish between $B$ cycles that correspond to boundaries, and $I$ that correspond to internal vertices, as follows.
$$ B(\gamma) \;=\; \sum\limits_{k=1}^{m}X_k(\gamma) \;\;\;\;\;\;\;\; I(\gamma) \;= \sum\limits_{k=m+1}^{N}X_k(\gamma) $$

Before we study these sums, we correct the parity bias of $\gamma$. Flip a fair coin, and let $\delta = (1\,2) \circ \gamma$ if it comes heads, and $\delta = \gamma$ if tails. Thus $\delta$ is equally likely to be even or odd, and it is $O(1/N)$ distant in total variation from the uniform distribution on order-$N$ permutations. Note that $X_k(\delta)=X_k(\gamma)$ for all $k \neq 2$, while $X_2(\delta)$ is independent of $\gamma$ and uniform in $\{0,1\}$ and therefore might differ from~$X_2(\gamma)$. In conclusion, $|B(\delta)-B(\gamma)| \leq 1$ and $I(\delta)=I(\gamma)$ for $m \geq 2$.

For a perfectly uniform permutation $\varepsilon$, the classical argument says that $X_1(\varepsilon),X_2(\varepsilon),X_3(\varepsilon),\dots,X_N(\varepsilon)$ are mutually independent and $X_k(\varepsilon)=1$ with probability~$1/k$. In short, this follows by randomly throwing the values $N,...,3,2,1$ one by one into an array of size $N$ representing~$\varepsilon$. At each step, the probability that $k$ closes a cycle in $\varepsilon$ is~$1/k$, independently of what has happened previously. See Feller's textbook~\cite[pages 257-258]{feller1968introduction}. It follows that the sums $B(\varepsilon)$ and $I(\varepsilon)$ are independent.

The indicator variables $X_k(\varepsilon)$ are used to show that the distribution of the total number of cycles $S_N = X_1+\dots+X_N$ is approximately normal with asymptotic expected value $\log N$ and variance $\log N$. This requires a variant of the central limit theorem for independent random variables that are not necessarily identically distributed. The following version is sufficient in this case: $(S_N-\mathbb{E}S_N)/\sigma(S_N) \to \mathcal{N}(0,1)$ in distribution as $N \to \infty$, given uniformly bounded summands $|X_k| \leq 1$ with a divergent variance $\sigma^2(S_N) \to \infty$ [\emph{ibid.}, pages 254-255]. 

The Harmonic sum $\sum_{k=1}^N\tfrac1k = (1 + o(1))\log N$ gives the estimate of the expected value and the variance of the total cycle count~$S_N$~[\emph{ibid.}]. By the same estimate, its two parts $B$ and $I$ may be normalized as follows:
$$ \hat{B}(\varepsilon) \;=\; \frac{B(\varepsilon) - \log m}{\sqrt{\log m}} \;\;\;\;\;\;\;\; \hat{I}(\varepsilon) \;=\; \frac{I(\varepsilon) - \log (N/m)}{\sqrt{\log (N/m)}} $$
Since $1 \ll m \ll N$ as $n \to \infty$, these two random variables converge in distribution to independent standard Gaussians.

It follows that also $(\hat{B}(\delta),\hat{I}(\delta))$ converges to a standard 2-dimensional Gaussian distribution, since the total variation distance between $\delta$ and $\varepsilon$ is~$o(1)$ as shown above. In order to go back to $\gamma$ we use Slutsky's Theorem in $\mathbb{R}^2$, which asserts that if a sequence of random variables $Z_n \to Z$ in distribution, and for another sequence $Y_n$ the distance $d(Z_n,Y_n) \to 0$ in probability, then $Y_n \to Z$ in distribution as well \cite[Th~3.1]{billingsley2013convergence}. Therefore, the bivariate normal limit holds also for $(\hat{B}(\gamma),\hat{I}(\gamma))$, whose euclidean distance to $(\hat{B}(\delta),\hat{I}(\delta))$ is at most $1/\sqrt{\log m} = o(1)$.

In particular, the normalized number of boundary components $\hat{B}(\gamma)$ is asymptotically normal as claimed in the theorem.

\medskip
As in previous works, we compute the genus via Euler's characteristic, using the number of internal vertices~$I(\gamma)$. Recall that the Euler characteristic of a closed connected orientable surface of genus $G$ is equal to $2-2G$. For a surface with $B$ boundary components it is $2-2G-B$, such that the genus is invariant under filling boundaries with disks. 

With probability $1-O(1/n)$, the surface $T'(n,m)$ is connected~\cite[\S3]{pippenger2006topological}, and its Euler characteristic is
\begin{align*}
2-2G-B \;&=\; \#\text{faces}-\#\text{edges}+\#\text{vertices} \\
\;&=\; n - (\tfrac32 n + m) + (I+m)    
\end{align*}
For $S'(n,m)$, the terms $n$ and $\frac32 n$ are respectively replaced by $1$ and $\frac{1}{2}n$. In either case,
$$ G \;=\; \tfrac14 n - \tfrac12 (B+I) + O(1) $$

The theorem now follows by a linear change of variables in the 2-Gaussian. The first random variable is $\hat{B}(\gamma)$ as above, and the second one is
\begin{align*}
\frac{G\;-\;\left(\tfrac14 n - \tfrac12\log n\right)}{\tfrac12 \sqrt{\log n}} \;\;&=\; \;-\, \sqrt{\tfrac{\log m}{\log n}} \,\hat{B}(\gamma) \;-\, \sqrt{\tfrac{\log (n/m)}{\log n}} \,\hat{I}(\gamma) \;+\; o(1) \\[0.5em]
\;&=\; \;-\,r\,\hat{B}(\gamma)\;-\, \sqrt{1-r^2}\,\hat{I}(\gamma)\;+\;o(1)    
\end{align*}
with probability $1-o(1)$ as $n \to \infty$. Hence the desired limit distribution is given by the same linear combinations of two standard Gaussians.
\qed

\section{Remarks}

\noindent 
\textbf{(1)}
If $\log m \ll \log n$ then $r=0$ and the limit is a standard two-dimensional Gaussian, so that $B$ and $G$ are ``asymptotically independent''. 

\medskip
\noindent 
\textbf{(2)}
At the other extreme $r=1$, the limiting Gaussian becomes degenerate, supported on the diagonal line: $2B + 4G = n + o(1/\sqrt{\log n})$ with high probability. The theorem actually extends without changes to the case of even larger boundary, $m = \Theta(n)$. The only difference in the proof is that the internal vertices are not asymptotically normal, but they become negligible.

\medskip
\noindent 
\textbf{(3)}
Setting $m=0$ in the random models reduces to the no-boundary case $B=0$, while $m=N$ yields a model of \emph{ribbon graphs} -- disks that are glued along disjoint segments. This latter case reduces to the no-boundary case as well, since $B$~depends on~$G$ deterministically, see~\cite{pippenger2006topological}.

\medskip
\noindent 
\textbf{(4)}
If we fix a positive $m$ and let $n$ grow, then $B$ converges to the discrete distribution of cycles in a random order-$m$ permutation. This is given by the Stirling numbers of the first kind: 
$$ P(B=b) \;\;\xrightarrow[\;\;n \to \infty\;\;]{}\;\; \tfrac{1}{m!}\left[\begin{smallmatrix}m\\b\end{smallmatrix}\right]\; \;\;\;\;\;\;\;\; b \in \{1,\dots,m\} $$
This case requires a more careful treatment of the parity issue. In short, consider the cycles of an order-$n$ uniformly random, say, \emph{even} permutation and delete all elements greater than~$m$. One can show by induction on deletions that the resulting order-$m$ permutation satisfies $\left|P(\text{even})-P(\text{odd})\right| = m(m-1)/n(n-1)$, being uniform within each parity class. It thereby converges in total variation to a uniformly random one, since $m \ll n$. 

\medskip
\noindent 
\textbf{(5)}
The above models $T(n,m)$ and $T'(n,m)$ and these results extend in a straightforward way from triangles to $t$-sided polygons, for any $t \geq 3$. 

\medskip
\noindent 
\textbf{(6)}
It is plausible that other natural variations on these models behave the same. For example, one may randomly \emph{replace} polygon edges by boundary edges, instead of \emph{adding} them. This approach leads to fixed points in the gluing map~$\beta$. When $m$ is small, $\gamma$~may still be approximately uniform, since Larsen and Shalev allow $n^{o(1)}$ fixed points~\cite{larsen2008characters}, though there are other complications to address.

\medskip
\noindent
\textbf{Acknowledgements.} Michael Farber was partially supported by a grant from the Leverhulme Foundation. Chaim Even-Zohar was supported by the Lloyd’s Register Foundation / Alan Turing Institute programme on
Data-Centric Engineering.

We thank the anonymous reviewer for helpful suggestions that greatly improved the final version of the paper.

\bibliographystyle{alpha}
\bibliography{_main}

\begin{thebibliography}{Gam06}

\bibitem[Bil13]{billingsley2013convergence}
Patrick Billingsley.
\newblock {\em Convergence of probability measures}.
\newblock John Wiley \& Sons, 2013.

\bibitem[BM04]{brooks2004random}
Robert Brooks and Eran Makover.
\newblock Random construction of {R}iemann surfaces.
\newblock {\em Journal of Differential Geometry}, 68(1):121--157, 2004.

\bibitem[CP13]{chmutov2013genus}
Sergei Chmutov and Boris Pittel.
\newblock The genus of a random chord diagram is asymptotically normal.
\newblock {\em Journal of Combinatorial Theory, Series A}, 120(1):102--110,
  2013.

\bibitem[CP16]{chmutov2016surface}
Sergei Chmutov and Boris Pittel.
\newblock On a surface formed by randomly gluing together polygonal discs.
\newblock {\em Advances in Applied Mathematics}, 73:23--42, 2016.

\bibitem[DS81]{diaconis1981generating}
Persi Diaconis and Mehrdad Shahshahani.
\newblock Generating a random permutation with random transpositions.
\newblock {\em Probability Theory and Related Fields}, 57(2):159--179, 1981.

\bibitem[Fel68]{feller1968introduction}
William Feller.
\newblock {\em An Introduction to Probability Theory and Its Applications},
  volume~I.
\newblock John Wiley \& Sons, 1968.

\bibitem[FL97]{fomin1997number}
Sergei Fomin and Nathan Lulov.
\newblock On the number of rim hook tableaux.
\newblock {\em Journal of Mathematical Sciences}, 87(6):4118--4123, 1997.

\bibitem[FP10]{fleming2010large}
Kevin Fleming and Nicholas Pippenger.
\newblock Large deviations and moments for the {E}uler characteristic of a
  random surface.
\newblock {\em Random Structures \& Algorithms}, 37(4):465--476, 2010.

\bibitem[Gam06]{gamburd2006poisson}
Alex Gamburd.
\newblock Poisson--{D}irichlet distribution for random {B}elyi surfaces.
\newblock {\em The Annals of Probability}, 34(5):1827--1848, 2006.

\bibitem[HZ86]{harer1986euler}
John Harer and Don Zagier.
\newblock The {E}uler characteristic of the moduli space of curves.
\newblock {\em Inventiones mathematicae}, 85(3):457--485, 1986.

\bibitem[LN11]{linial2011expected}
Nati Linial and Tahl Nowik.
\newblock The expected genus of a random chord diagram.
\newblock {\em Discrete \& Computational Geometry}, 45(1):161--180, 2011.

\bibitem[LS08]{larsen2008characters}
Michael Larsen and Aner Shalev.
\newblock Characters of symmetric groups: sharp bounds and applications.
\newblock {\em Inventiones mathematicae}, 174(3):645, 2008.

\bibitem[PS06]{pippenger2006topological}
Nicholas Pippenger and Kristin Schleich.
\newblock Topological characteristics of random triangulated surfaces.
\newblock {\em Random Structures \& Algorithms}, 28(3):247--288, 2006.

\end{thebibliography}

\appendix 

\end{document}